\def\ps@pprintTitle{%
 \let\@oddhead\@empty
 \let\@evenhead\@empty
 \def\@oddfoot{}%
 \let\@evenfoot\@oddfoot}
\newtheorem{thm}{Theorem}
\newtheorem{lem}[thm]{Lemma}
\newdefinition{rmk}{Remark}
\newproof{pf}{Proof}
\newproof{pot}{Proof of Theorem \ref{thm2}}
\begin{document}

\SetGraphUnit{1}
  \GraphInit[vstyle=Simple]
  \SetVertexSimple[MinSize    = 6pt, LineColor = blue!60, FillColor = blue!60]

\begin{frontmatter}


\title{Tur\'{a}n numbers of vertex-disjoint cliques in $r$-partite graphs}

\author[unl]{Jessica De Silva\corref{cor1}\fnref{fn1}}
\ead{jessica.desilva@huskers.unl.edu}

\author[macalester]{Kristin Heysse}
\ead{kheysse@macalester.edu}

\author[uw]{Adam Kapilow\fnref{fn2}}
\ead{akapilow@uw.edu}

\author[montana]{Anna Schenfisch\fnref{fn2}}
\ead{annaschenfisch@montana.edu}

\author[iastate]{Michael Young\fnref{fn2}}
\ead{myoung@iastate.edu}

\address[unl]{University of Nebraska-Lincoln, United States}
\address[macalester]{Macalester College, Saint Paul, MN, 55105, United States}
\address[uw]{University of Washington, United States}
\address[montana]{Montana State University, United States}
\address[iastate]{Iowa State University, United States}
\fntext[fn1]{Research is supported in part by the NSF-GRFP Grant DGE-1041000.}
\fntext[fn2]{Research was supported in part by the NSA Grant H98230-16-1-0038.}
\cortext[cor1]{Principal corresponding author}


\begin{abstract}
For two graphs $G$ and $H$, the \emph{Tur\'{a}n number} $\operatorname{ex}(G,H)$ is the maximum number of edges in a subgraph of $G$ that contains no copy of $H$. Chen, Li, and Tu determined the Tur\'{a}n numbers $\operatorname{ex}(K_{m,n},kK_2)$ for all $k\geq 1$ \cite{CTL}. In this paper we will determine the Tur\'{a}n numbers $\operatorname{ex}(K_{a_1,\ldots,a_r},kK_r)$ for all $r\geq 3$ and $k\geq 1$.
\end{abstract}

\end{frontmatter}


\section{Introduction}
All graphs considered here are finite, undirected, and simple. Throughout the paper we use the standard graph theory notation (see \cite{BM}). In particular, a graph is called a \emph{complete $r$-partite graph} if its vertex set can be partitioned into $r$ independent sets $V_1,\ldots,V_r$ such that for any $i=1,2,\ldots,r$ every vertex in $V_i$ is adjacent to all other vertices in $V_j$, $j\neq i$. We denote a complete $r$-partite graph with part sizes $|V_i|=n_i$ by $K_{n_1,\ldots,n_r}$. For a graph $G$ and a positive integer $k$ we use $kG$ to denote $k$ vertex-disjoint copies of $G$. Given $S\subseteq V(G)$, the subgraph of $G$ induced by $S$ will be denoted $G[S]$ and the subgraph $G[V(G)\backslash S]$ will be denoted $G\backslash S$. For two graphs $G$ and $H$, $G+H$ is the join of $G$ and $H$, that is the graph obtained from $G\cup H$ by adding every edge containing a vertex of $G$ and a vertex of $H$.

For two graphs $G$ and $H$, the \emph{Tur\'{a}n number}, or \emph{extremal number}, $\operatorname{ex}(G,H)$ is the maximum number of edges among all $H-$free subgraphs of a host graph $G$. The study of such numbers began in 1907 when Willem Mantel determined the maximum number of edges in a triangle-free graph on $n$ vertices, i.e. $\operatorname{ex}(K_n,K_3)$. In 1941, this theorem was strengthened by P\'{a}l Tur\'{a}n who determined $\operatorname{ex}(K_n,K_r)$. Since then, the most well-studied host graphs have been the complete graph $K_n$ and the complete bipartite graph $K_{m,n}$. Recent studies of extremal numbers consider the case when the forbidden graph $H$ is made up of several vertex-disjoint copies of some smaller graph (e.g., \cite{BK}, \cite{LLP}, \cite{Gor}, \cite{YZ}). In particular, Chen, Li, and Tu determined $\operatorname{ex}(K_{m,n},kK_2)=m(k-1)$ for $1\leq k\leq n\leq m$ \cite{CTL}. The focus of this paper is to extend their result to forbidding vertex-disjoint cliques of size $r$ in a complete $r$-partite graph.

\begin{thm}[Main Theorem]\label{main}
For any integers $1 \leq k \leq n_1 \leq ... \leq n_r$,
\[
\operatorname{ex}(K_{n_1, ... , n_r}, kK_r) = \sum_{1\leq i<j\leq r}n_in_j - n_1n_2 + n_2(k-1).
\]
\end{thm}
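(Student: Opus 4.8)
Let $G^{*}$ be the spanning subgraph of $K:=K_{n_1,\dots,n_r}$ obtained by deleting all $n_1n_2$ edges between $V_1$ and $V_2$ and then putting back every edge joining a fixed $(k-1)$-element set $S\subseteq V_1$ to all of $V_2$. Then $e(G^{*})=\sum_{1\le i<j\le r}n_in_j-(n_1-k+1)n_2=\sum_{i<j}n_in_j-n_1n_2+n_2(k-1)$. Every copy of $K_r$ in $G^{*}$ meets each part in one vertex, and its $V_1$-vertex is adjacent to its $V_2$-vertex, hence lies in $S$; so at most $|S|=k-1$ copies of $K_r$ in $G^{*}$ are pairwise disjoint, and in particular $G^{*}$ is $kK_r$-free. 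This gives $\operatorname{ex}(K,kK_r)\ge\sum_{i<j}n_in_j-n_1n_2+n_2(k-1)$.

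\textbf{Upper bound, and the first case.}
Write $f_r(k)=\sum_{1\le i<j\le r}n_in_j-n_1n_2+n_2(k-1)$; the plan is to show $e(G)\le f_r(k)$ for every $kK_r$-free $G\subseteq K$, equivalently that $G$ has at least $(n_1-k+1)n_2$ non-edges (pairs of $K$ absent from $G$), by induction on $r$, the base case $r=2$ being the theorem of Chen, Li and Tu. For $r\ge 3$ I would put $G':=G[V_1\cup\dots\cup V_{r-1}]\subseteq K':=K_{n_1,\dots,n_{r-1}}$ and let $t$ be the maximum number of pairwise disjoint copies of $K_{r-1}$ in $G'$ (so $t\le n_1$). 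If $t\le k-1$, then $G'$ is $kK_{r-1}$-free, so by induction $e(G')\le f_{r-1}(k)$; since every vertex of $V_r$ is adjacent to at most $\sum_{j<r}n_j$ vertices of $G'$, and $\sum_{1\le i<j\le r-1}n_in_j+n_r\sum_{j<r}n_j=\sum_{1\le i<j\le r}n_in_j$, we get $e(G)\le e(G')+n_r\sum_{j<r}n_j\le f_r(k)$.

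\textbf{The second case.}
In the remaining case $t\ge k$ I would argue as follows. Fix pairwise disjoint copies $D_1,\dots,D_t$ of $K_{r-1}$ in $G'$ and form the bipartite graph between $\{D_1,\dots,D_t\}$ and $V_r$ joining $D_i$ to $z$ whenever $z$ is adjacent in $G$ to every vertex of $D_i$; a matching of size $m$ here produces $m$ pairwise disjoint copies of $K_r$, so the maximum matching has size $m\le k-1$. By the defect version of Hall's theorem, $d:=\max_{I\subseteq\{1,\dots,t\}}\bigl(|I|-|N(I)|\bigr)=t-m\ge t-k+1$, where $N(I)\subseteq V_r$ is the set of vertices adjacent to all of some $D_i$ with $i\in I$; pick $I$ achieving the maximum, with $p:=|I|$ and $|N(I)|=p-d$. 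Each of the $n_r-(p-d)$ vertices of $V_r\setminus N(I)$ fails to be adjacent to all of $D_i$ for every $i\in I$, hence is a non-neighbour of at least one vertex in each such $D_i$; since the $D_i$ are disjoint this yields at least $p\bigl(n_r-p+d\bigr)\ge d\,n_r\ge d\,n_2$ non-edges of $G$ between $\bigcup_{i\in I}D_i$ and $V_r$, the inequality $p(n_r-p+d)=dn_r+(p-d)(n_r-p)\ge dn_r$ holding because $d\le p\le t\le n_1\le n_r$. If in addition $t<n_1$, then $G'$ is $(t+1)K_{r-1}$-free, so by induction $e(G')\le f_{r-1}(t+1)$ and $G'$ has at least $n_2(n_1-t)$ non-edges inside $K'$, which are disjoint from the ones just found. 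Altogether $G$ has at least $n_2(n_1-t)+d\,n_2\ge n_2(n_1-t)+n_2(t-k+1)=(n_1-k+1)n_2$ non-edges (if $t=n_1$ the first term vanishes and $d\,n_2\ge(n_1-k+1)n_2$ already suffices), so $e(G)\le f_r(k)$.

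\textbf{Expected main difficulty.}
I expect the delicate part to be the second case: invoking the defect form of Hall's theorem correctly, verifying that the ``non-edges inside $G'$'' and the ``non-edges out to $V_r$'' are genuinely disjoint and of the claimed sizes, and separately treating the boundary value $t=n_1$, where the inductive hypothesis is unavailable for $G'$ because $t+1$ exceeds its smallest part. Everything else reduces to the bookkeeping identity $\sum_{1\le i<j\le r-1}n_in_j+n_r\sum_{j<r}n_j=\sum_{1\le i<j\le r}n_in_j$ and to the elementary estimate above for $p(n_r-p+d)$.
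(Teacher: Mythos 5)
Your proof is correct, but it takes a genuinely different route from the paper's. The paper fixes $r$ and inducts on the part sizes: when $n_2=\cdots=n_r$ it runs a double induction on $n_1+k$, driven by an averaging argument over all transversals $S\in\mathcal{R}(G,r)$ that simultaneously lower-bounds the number of $K_r$-copies and upper-bounds $\sum_{S}|E(G\setminus S)|$ via the inductive hypothesis applied to each $G\setminus S$; when $n_2<n_r$ it deletes a vertex of $V_r$ and inducts on the number of vertices. You instead induct on $r$ itself, taking the Chen--Li--Tu bipartite theorem as the base case, and split on $t$, the maximum number of disjoint $K_{r-1}$'s in $G'=G[V_1\cup\cdots\cup V_{r-1}]$: for $t\le k-1$ the bound follows at once from the inductive hypothesis for $G'$, and for $t\ge k$ you apply the K\"onig--Ore defect formula to the auxiliary bipartite graph between the $D_i$'s and $V_r$, then count non-edges, combining the at least $dn_2$ missing pairs into $V_r$ with the at least $n_2(n_1-t)$ missing pairs inside $G'$ guaranteed by the inductive hypothesis for $(t+1)K_{r-1}$ when $t<n_1$. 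I checked the key points --- that $m=t-d\le k-1$, that $p(n_r-p+d)=dn_r+(p-d)(n_r-p)\ge dn_r\ge dn_2$ (valid since $d\le p\le t\le n_1\le n_r$), that the two families of non-edges are disjoint (one family meets $V_r$, the other does not), and the boundary case $t=n_1$ --- and they all hold. Your argument is more elementary in flavor (matching theory plus non-edge counting) and leans directly on the known $r=2$ result, whereas the paper's averaging argument is self-contained for each fixed $r$ and yields the $k=1$ Tur\'an-type bound of Lemma \ref{base1} as a clean by-product; your induction on $r$ also avoids the paper's case split between balanced and unbalanced part sizes.
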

For the lower bound, clearly $((n_1-k+1)K_1\cup K_{k-1,n_2})+K_{n_3,\ldots,n_r}$ is such a subgraph of $K_{n_1,\ldots,n_r}$ with the required number of edges and no copy of $kK_r$. This gives the lower bound and the remainder of this paper will work to establish the upper bound. The upper bound is proven by considering two cases: $n_2=n_r$ and $n_2<n_r$. In the former case, the proof is inductive on $n_1+k$ with Lemmas \ref{base1} and \ref{base2} as the base cases. In the latter case, the proof is inductive on the total number of vertices in the host graph.


 \section{Main results}
The proof of Theorem \ref{main} first considers the case when the host graph is almost balanced, that is every part size is the same except the smallest part. This proof requires a double induction and is preceded by the two necessary base cases.
 
 Define
 \[h_k(n_1,n_2,\ldots,n_r)=\sum_{1\leq i<j\leq r}n_in_j-n_1n_2+n_2(k-1).\]
Given two disjoint subsets of the vertex set, $A,B\subseteq V(G)$, define $AB$ as the graph formed by the set of edges in $G$ incident to a vertex in $A$ and a vertex in $B$.
 
 For ease of notation, given an $r$-partite graph $G$ with parts $V_1,\ldots,V_r$ we let $\mathcal{R}(G,r)=\{\{v_1,\ldots,v_r\}\subseteq V(G)\,:\, v_i\in V_i\text{ for all }i\in[r]\},$ that is $\mathcal{R}(G,r)$ is the set of all $r$-tuples of vertices with exactly one vertex from each part. We utilize $\mathcal{R}(G,r)$ throughout to facilitate the counting of edges. For $S\in \mathcal{R}(G,r)$ define
 \[w(S)=|E(G[S])|.\]
 Note that for $S\in\mathcal{R}(G,r)$ an edge $v_iv_j\in V_iV_j$ is counted in $w(S)$ if and only if both $v_i$ and $v_j$ are present in $S$, therefore, summing over all $S\in\mathcal{R}(G,r)$,
 \begin{equation}\label{weight}\sum_{S\in\mathcal{R}(G,r)}w(S)=\sum_{1\leq i<j\leq r}|E(V_iV_j)|\prod_{\ell\neq i,j}n_\ell.\end{equation}
 
\begin{lem}\label{base1}
For $1 \leq n_1 \leq n_2,$
\[
\operatorname{ex}(K_{n_1,n_2,\ldots,n_2}, K_r) = h_1(n_1,n_2,\ldots,n_2).
\]
\end{lem}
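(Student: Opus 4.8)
The plan is to prove the lower and upper bounds separately. For the lower bound I would exhibit the obvious construction: delete from $K_{n_1,n_2,\ldots,n_2}$ all $n_1n_2$ edges between the parts $V_1$ and $V_2$. Since any copy of $K_r$ in an $r$-partite graph uses exactly one vertex from each part, and hence an edge of $V_1V_2$, the resulting spanning subgraph is $K_r$-free, and it has $\sum_{1\le i<j\le r}n_in_j-n_1n_2=h_1(n_1,n_2,\ldots,n_2)$ edges. It then remains only to prove the matching upper bound.

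For the upper bound, take any $K_r$-free subgraph $G$ of $K_{n_1,n_2,\ldots,n_2}$ with parts $V_1,\ldots,V_r$ (we may assume $G$ is spanning); the goal is to show that $G$ misses at least $n_1n_2$ edges of the host, i.e.\ $|E(G)|\le\sum_{i<j}n_in_j-n_1n_2$. The key observation is that every $S\in\mathcal{R}(G,r)$ must contain a non-edge of $G$, since otherwise all $\binom{r}{2}$ pairs inside $S$ would be edges and $G[S]$ would be a copy of $K_r$. Let $\overline w(S)=\binom{r}{2}-w(S)$ count the non-edges of $G$ inside $S$, and let $\overline e_{ij}=n_in_j-|E(V_iV_j)|$ count the non-edges of $G$ between $V_i$ and $V_j$; counting incident pairs $(S,v_iv_j)$ from both sides, exactly as in \eqref{weight}, gives
\[
\sum_{S\in\mathcal{R}(G,r)}\overline w(S)=\sum_{1\le i<j\le r}\overline e_{ij}\prod_{\ell\neq i,j}n_\ell .
\]
Since $\overline w(S)\ge 1$ for every $S$ and $|\mathcal{R}(G,r)|=n_1n_2^{\,r-1}$, the left-hand side is at least $n_1n_2^{\,r-1}$. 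On the right-hand side, the almost-balanced hypothesis $n_1\le n_2=\cdots=n_r$ forces $\prod_{\ell\neq i,j}n_\ell\le n_2^{\,r-2}$ for every pair (it equals $n_2^{\,r-2}$ when $1\in\{i,j\}$, and $n_1n_2^{\,r-3}$ otherwise), so $n_1n_2^{\,r-1}\le n_2^{\,r-2}\sum_{i<j}\overline e_{ij}$. Dividing by $n_2^{\,r-2}$ yields $\sum_{i<j}\overline e_{ij}\ge n_1n_2$, and since $\sum_{i<j}\overline e_{ij}=\sum_{i<j}n_in_j-|E(G)|$, this is precisely the desired inequality.

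This is a short covering-type count, so I do not expect a genuine obstacle; the point to be careful about is the direction of the bound on the products $\prod_{\ell\neq i,j}n_\ell$. In contrast to the corresponding estimate for the edges of $G$ itself, here these weights must be bounded from \emph{above}, and it is exactly the hypothesis that $V_1$ is the unique smallest part that makes $n_2^{\,r-2}$ the correct and attainable bound — the extremal configuration packs all $n_1n_2$ missing edges between the two lightest-weight parts $V_1$ and $V_2$, matching the lower-bound construction. I would also record the degenerate check $r=2$, where the empty product equals $1$, the statement reads $\operatorname{ex}(K_{n_1,n_2},K_2)=0$, and the division above is harmless.
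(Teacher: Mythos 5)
Your proof is correct. It is built on the same double count over $\mathcal{R}(G,r)$ that the paper uses --- the identity (\ref{weight}) together with the observation that a $K_r$-free subgraph has $w(S)\le\binom{r}{2}-1$ for every $S$ --- but you run the count in the complement, tallying missing edges rather than present ones. That reorganization is a genuine, if modest, simplification: since what you need is a \emph{lower} bound on the total number of non-edges $\sum_{i<j}\overline{e}_{ij}$, the relevant estimate on the weights $\prod_{\ell\neq i,j}n_\ell$ is the uniform upper bound $n_2^{r-2}$, which holds for every pair $\{i,j\}$ precisely because $n_1\le n_2=\cdots=n_r$, and the argument closes in one line after dividing by $n_2^{r-2}$. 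The paper, working directly with the quantities $|E(V_iV_j)|$, faces mismatched coefficients ($n_2^{r-2}$ for pairs involving $V_1$ versus $n_1n_2^{r-3}$ for the rest) pointing in the unfavorable direction, and must interpose the auxiliary estimate $\sum_{j=2}^r|E(V_1V_j)|\ge|E(G)|-\binom{r-1}{2}n_2^2$ before the algebra resolves; your complementary formulation avoids that step entirely. Your lower-bound construction (delete all $V_1V_2$ edges) is exactly the $k=1$ specialization of the construction the paper records once, immediately after the statement of the Main Theorem, so nothing is missing there either.
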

\begin{proof}
Suppose $G \subseteq K_{n_1,n_2,...,n_2}$ does not contain a copy of $K_r$. Then for all $S\in\mathcal{R}(G,r)$, $w(S) \leq \binom{r}{2}-1$ and hence
\begin{equation}\label{ineq1}
\sum_{S\in\mathcal{R}(G,r)} w(S) \leq \left(\binom{r}{2}-1\right)n_1n_2^{r-1}.
\end{equation}
Subtracting (\ref{ineq1}) from (\ref{weight}) yields,
\begin{align*}
0 &\geq \sum_{j=2}^{r}|E(V_1V_j)|n_2^{r-2} + \sum_{i,j\neq 1} |E(V_iV_j)|n_1n_2^{r-3}-\left(\binom{r}{2}-1 \right)n_1n_2^{r-1} \\
&= n_1n_2^{r-3}|E(G)| + \sum_{j=2}^{r}|E(V_1V_j)|n_2^{r-3}(n_2-n_1)- \left(\binom{r}{2}-1 \right)n_1n_2^{r-1} \\
&\geq n_1n_2^{r-3}|E(G)|+\left(|E(G)|-\binom{r-1}{2}n_2^2\right)n_2^{r-3}(n_2-n_1)-\left(\binom{r}{2}-1\right)n_1n_2^{r-1}\\
&=n_2^{r-2}|E(G)|-\binom{r-1}{2}n_2^{r-1}(n_2-n_1)-\left(\binom{r}{2}-1\right)n_1n_2^{r-1}\\
&=n_2^{r-2}|E(G)|-(r-2)n_1n_2^{r-1}-\binom{r-1}{2}n_2^r.
\end{align*}
Therefore
\[|E(G)|\leq n_1n_2(r-1)+\binom{r-1}{2}n_2^2-n_1n_2=h_1(n_1,n_2,\ldots,n_2).\]
\end{proof}


\begin{lem}\label{base2}
For $1 \leq n_1 \leq n_2$,
$$
\operatorname{ex}(K_{n_1,n_2,...,n_2}, n_1K_r) = h_{n_1}(n_1,n_2,\ldots,n_2).
$$
\end{lem}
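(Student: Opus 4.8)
The plan is to get the lower bound from the construction recorded after Theorem~\ref{main} with $k=n_1$ — the graph $\bigl(K_1\cup K_{n_1-1,n_2}\bigr)+K_{n_2,\ldots,n_2}$, whose only missing edges are the $n_2$ edges from the $K_1$-vertex $u\in V_1$ to $V_2$, so that it has $h_{n_1}(n_1,n_2,\ldots,n_2)$ edges and is $n_1K_r$-free since $u$ lies in no $K_r$ while every $n_1K_r$ uses all of $V_1$. The work is the upper bound, which I would prove by induction on $n_1$. Writing $T$ for the number of edges of $K_{n_1,n_2,\ldots,n_2}$, one checks $h_{n_1}(n_1,n_2,\ldots,n_2)=T-n_2$, so the goal is that every $n_1K_r$-free $G\subseteq K_{n_1,n_2,\ldots,n_2}$ omits at least $n_2$ host edges; the base case $n_1=1$ is exactly Lemma~\ref{base1}.

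For the inductive step ($n_1\ge2$), call a vertex \emph{saturated} if it is joined to all its potential neighbours, and split on $V_1$. If some $u\in V_1$ is unsaturated and lies in a copy $Q$ of $K_r$, then $G\setminus Q\subseteq K_{n_1-1,n_2-1,\ldots,n_2-1}$ is $(n_1-1)K_r$-free (otherwise $G\supseteq n_1K_r$), so by induction $|E(G\setminus Q)|\le h_{n_1-1}(n_1-1,n_2-1,\ldots,n_2-1)$; since $u$ omits a host edge at $Q$, accounting for the host edges incident to $Q$ (and using $h_{n_1-1}(n_1-1,n_2-1,\ldots,n_2-1)=T'-(n_2-1)$, where $T'$ is the edge count of $K_{n_1-1,n_2-1,\ldots,n_2-1}$) gives $|E(G)|\le T-n_2$. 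If some $u\in V_1$ is unsaturated but lies in no $K_r$, then the $(r-1)$-partite graph induced on $N(u)\cap(V_2\cup\cdots\cup V_r)$ is $K_{r-1}$-free; bounding the $V_1$--$(V_2\cup\cdots\cup V_r)$ edges by $\deg(u)+(n_1-1)(r-1)n_2$ and the edges inside $V_2\cup\cdots\cup V_r$ by separating off the non-neighbours of $u$ and invoking that $K_{r-1}$-freeness on what remains, one gets $|E(G)|\le T-n_2$ after the elementary step $\sum_j d_j\ge\max_j d_j$, where $d_j$ is the number of non-neighbours of $u$ in $V_j$. Finally, if every vertex of $V_1$ is saturated and $G$ omitted fewer than $n_2$ host edges, all omitted edges would lie inside $V_2\cup\cdots\cup V_r$, and the packing statement below would give $n_2$ pairwise-disjoint transversal copies of $K_{r-1}$ there; joining $n_1\le n_2$ of them to the saturated vertices of $V_1$ produces $n_1K_r\subseteq G$, a contradiction.

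The technical heart, used in the last two situations, is the claim: if an $(r-1)$-partite graph with parts of sizes $a_2,\ldots,a_r$ omits fewer than $a:=\min_j a_j$ of its possible edges, then it contains $a$ pairwise-disjoint copies of $K_{r-1}$, one vertex in each part. I would prove this probabilistically: choose independent uniform random injections $\phi_j\colon[a]\to(\text{part }j)$ and set $C_t=\{\phi_2(t),\ldots,\phi_r(t)\}$ for $t\in[a]$; the $C_t$ are automatically disjoint transversals, each fixed omitted edge lies in some $C_t$ with probability at most $1/a$, so the expected number of $t$ for which $C_t$ is not a clique is less than $1$, whence some choice of the $\phi_j$ makes all $a$ of the $C_t$ cliques (equivalently, a $K_{r-1}$-free $(r-1)$-partite graph omits at least $\min_j a_j$ edges). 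I expect this packing lemma to be the main conceptual obstacle — a greedy argument is too lossy and the induction fails to close when $n_1$ is close to $n_2$ — together with the edge bookkeeping in the first case, which works only because deleting a $K_r$ through an unsaturated vertex recovers precisely the one extra missing edge needed to absorb the off-by-one in the inductive estimate.
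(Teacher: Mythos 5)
Your argument is correct, but it takes a genuinely different route from the paper. The paper's inductive step is a single global count: since $|E(G)|>h_{n_1}(n_1,n_2,\ldots,n_2)>h_1(n_1,n_2,\ldots,n_2)$, Lemma \ref{base1} supplies a copy $S$ of $K_r$; deleting it and applying the induction hypothesis to $G\setminus S$ forces $|E(G)|-|E(G\setminus S)|$ to exceed the number of host edges meeting $S$ minus one, so \emph{every} host edge meeting $S$ is present in $G$; a vertex-swap argument then propagates this to show $G\cong K_{n_1,n_2,\ldots,n_2}$, which contains $n_1K_r$, a contradiction. You instead split on whether $V_1$ has an unsaturated vertex and whether that vertex lies in a $K_r$, and the heart of your proof is the packing lemma (fewer than $\min_j a_j$ omitted edges forces $\min_j a_j$ pairwise disjoint transversal copies of $K_{r-1}$), which the paper never needs; your probabilistic proof of it is sound, since for a fixed omitted edge the events $\{\phi_i(t)=x\}\cap\{\phi_j(t)=y\}$ over distinct $t$ are disjoint by injectivity, giving the bound $a/(a_ia_j)\le 1/a$, and your three cases do cover all possibilities, with the edge bookkeeping in each checking out (in Case A the omitted edge at $u$ leaves $Q$, so it is counted neither inside $Q$ nor in $G\setminus Q$, absorbing the off-by-one exactly as you say; in Case B the inequality $\sum_j d_j\ge\max_j d_j$ closes the count even when some part of $N(u)$ is empty). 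The trade-off: the paper's route is shorter and rests entirely on Lemma \ref{base1} plus arithmetic, while yours isolates a reusable packing statement, gives structural information about where the slack in a near-extremal graph can sit, and invokes the induction hypothesis in only one of your three cases. Both close the induction correctly.
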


\begin{proof}
This proof is by induction on $n_1$. The base case of $n_1=1$ is true for all positive integers $n_2$ by Lemma \ref{base1}. Assume the statement holds for $n_1'<n_1$ where $n_1\geq 2$. Suppose $G\subseteq K_{n_1,n_2,\ldots,n_2}$ contains more than $h_{n_1}(n_1,n_2,\ldots,n_2)$ edges and does not contain a copy of $n_1K_r$. We have $$|E(G)|>h_{n_1}(n_1,n_2,\ldots,n_2)>h_1(n_1,n_2,\ldots,n_2),$$ which implies $G$ contains a copy of $K_r$. Let $S\in\mathcal{R}(G,r)$ such that $G[S]\cong K_r$. Then $|E(G\backslash S)|\leq h_{n_1-1}(n_1-1,n_2-1,\ldots,n_2-1),$ otherwise $G\backslash S$ contains a copy of $(n_1-1)K_r$, and this together with $G[S]$ is a copy of $n_1K_r$ in $G$. Therefore,
\begin{align*}
|E(G)|-|E(G\backslash S)|&>h_{n_1}(n_1,n_2,\ldots,n_2)-h_{n_1-1}(n_1-1,n_2-1,\ldots,n_2-1)\\
&=(r-1)(n_1+(r-2)n_2)+(r-1)n_2-\binom{r}{2}-1.
\end{align*}
Since the number of edges in $K_{n_1,n_2,\ldots,n_2}$ with a vertex in $S$ is \[(r-1)(n_1+(r-2)n_2)+(r-1)n_2-\binom{r}{2},\] this implies all edges in the host graph containing a vertex in $S$ are present in $G$. Note that this holds for every $S$ such that $G[S]\cong K_r$ in $G$.

Let $u_i\in V_i$ and $u_j\in V_j$ with $i\neq j$, if neither $u_i$ nor $u_j$ is in $S$, then $u_iu_j\in E(G)$. Otherwise, for $v_i\in S\cap V_i$, let $S'=(S\backslash\{v_i\})\cup\{u_i\}$. $S'$ induces a copy of $K_r$ in $G$ and therefore $u_iu_j\in E(G)$. Hence $G\cong K_{n_1,n_2,\ldots,n_2}$ and thus contains $n_1K_r$ which is a contradiction.
\end{proof}

We now prove Theorem \ref{main}. The proof considers two cases: $n_2=n_r$ and $n_2<n_r$. The first case is proven using double induction and relies on Lemmas \ref{base1} and \ref{base2} as the base cases. The second case is proven by induction on the total number of vertices in the host graph. 
\begin{proof} Let $1\leq k\leq n_1\leq\cdots\leq n_r$.

\emph{Case 1.} Assume $n_2=n_r$, we proceed by induction on $n_1 + k$. The base case of $k=1$ is true for all positive integers $n_1$ by Lemma \ref{base1} and the base case of $n_1=k$ is true for all positive integers $k\leq n_1$ by Lemma \ref{base2}. Assume the statement is true for parameters $n_1',k'$ such that $n_1'+k'<n_1+k$ where $n_1>k\geq 2$, and that $G \subseteq K_{n_1, n_2,\ldots,n_2}$ does not contain a copy of $kK_r$.

We first obtain a lower bound on the number of (not necessarily disjoint) copies of $K_r$ in $G$. Suppose there are exactly $q$ such copies of $K_r$ in $G$, then
\begin{align*}
\sum_{S\in \mathcal{R}(G,r)} w(S) &\leq q\binom{r}{2} + (n_1n_2^{r-1} - q)\left( \binom{r}{2} - 1 \right).
\end{align*}
Recall that \[\sum_{S\in\mathcal{R}(G,r)}w(S)=\sum_{j=2}^r |E(V_1V_j)|n_2^{r-2} + \sum_{i,j\neq 1} |E(V_iV_j)|n_1n_2^{r-3},\] this gives,
\begin{equation}\label{ineq2}q\geq \sum_{j=2}^r |E(V_1V_j)|n_2^{r-2} + \sum_{i,j\neq 1} |E(V_iV_j)|n_1n_2^{r-3} - n_1n_2^{r-1}\left( \binom{r}{2} - 1 \right).\end{equation}

We will use (\ref{ineq2}) to get an upper bound on $|E(G)|$ by counting $\displaystyle{\sum_{S\in\mathcal{R}(G,r)}|E(G\backslash S)|}$. An edge $v_iv_j\in V_iV_j$ is counted in $|E(G\setminus S)|$ if and only if $v_i\not\in S$ and $v_j\not\in S$, hence
{\footnotesize{\begin{equation}\label{ineq3}
\sum_{S\in \mathcal{R}(G,r)} |E(G \setminus S)| =  \sum_{j=2}^r |E(V_1V_j)|(n_1-1)(n_2-1)n_2^{r-2} + \sum_{i,j\neq 1} |E(V_iV_j)|(n_2-1)^2n_1n_2^{r-3}.
\end{equation}}}

Using (\ref{ineq2}) and (\ref{ineq3}),
{\footnotesize{
\begin{align}
\sum_{S\in\mathcal{R}(G,r)}|E(G\backslash S)|+\left(q+n_1n_2^{r-1}\left(\binom{r}{2}-1\right)\right)(n_2-1)&\geq \sum_{j=2}^r|E(V_1V_j)|n_1(n_2-1)n_2^{r-2}\nonumber\\&+\sum_{i,j\neq1}|E(V_iV_j)|(n_2-1)n_1n_2^{r-2}\nonumber\\&=|E(G)|(n_2-1)n_2^{r-2}n_1.\label{ineq4}
\end{align}
}}
Now for $S\in \mathcal{R}(G,r)$, suppose $G[S]$ is a copy of $K_r$. Then $|E(G\backslash S)|\leq h_{k-1}(n_1-1,n_2-1,\ldots,n_2-1)$ else by induction $G\backslash S$ contains a copy of $(k-1)K_r$ and so this together with $G[S]$ yields a copy of $kK_r$ in $G$. If $G[S]$ is not complete, then since $G\backslash S$ does not contain a copy of $kK_r$ induction gives $|E(G\backslash S)|\leq h_k(n_1-1,n_2-1,\ldots,n_2-1)$. Hence
\begin{align*}
\sum_{S\in \mathcal{R}(G,r)} |E(G \setminus S)| &\leq q \big( h_{k-1}(n_1-1, n_2-1,\ldots, n_2 - 1) \big) \\ 
	&+ (n_1n_2^{r-1} - q) \big( h_k(n_1-1,n_2-1,\ldots,n_2 - 1) \big) \\
	&= q(1-n_2) +  n_1n_2^{r-1}(h_k(n_1-1,n_2-1,\ldots,n_2-1))
\end{align*}
and thus, using (\ref{ineq4}), we have
{\footnotesize{
\[
|E(G)|(n_2-1)n_2^{r-2}n_1 \leq n_1n_2^{r-1}\left(h_{k}(n_1-1,n_2-1,\ldots,n_2-1)+\left(\binom{r}{2}-1\right)(n_2-1)\right).
\]
}}
Therefore
\begin{align*}
|E(G)| &\leq \frac{n_2}{n_2-1}\left(h_k(n_1-1,n_2-1,\ldots,n_2-1)+\left(\binom{r}{2}-1\right)(n_2-1)\right)\\
& = h_k(n_1,n_2,n_2,\ldots,n_2).
\end{align*}

\emph{Case 2.} Assume $n_2<n_r$, we proceed by induction on the number of total vertices. The base case of $n_1=n_r$ is true for all positive integers $k$ by Case 1. Assume the statement holds for all parameters $n_1',\ldots,n_r'$ such that $\sum_{i=1}^rn_i'<\sum_{i=1}^rn_i$. Suppose $G\subseteq K_{n_1,\ldots,n_r}$ does not contain a copy of $kK_r$. Let $v_r \in V_r$, the graph $G\backslash\{v_r\}$ does not contain a copy of $kK_r$, has fewer vertices than $G$, and $n_2\leq n_r-1$. Therefore
\begin{align*}
|E(G)| &= |E(G \setminus \{v_r\})| + d(v_r) \\
	&\leq \operatorname{ex}(K_{n_1,\ldots,n_r-1},kK_r) + d(v_r) \\
	&=h_k(n_1,\ldots,n_r-1)+d(v_r)\\
	& = \sum_{1\leq i<j\leq r}n_in_j - \sum_{i = 1}^{r-1}n_i - n_1n_2 + n_2(k-1) + d(v_r)\\
	&\leq \sum_{1\leq i<j\leq r}n_in_j - n_1n_2 + n_2(k-1) \\
	&= h_k(n_1,n_2,\ldots,n_r).
\end{align*}
\end{proof}


\section{Concluding remarks}

The main theorem relies on the fact that both $K_r$ and $K_{n_1,n_2,\ldots, n_r}$ are $r$-partite. Certainly the host graph must be $\ell$-partite for $\ell \geq r$ to have $K_r$ as a subgraph. An interesting generalization would be to calculate $\operatorname{ex}(K_{n_1,n_2,\ldots,n_\ell},kK_r)$ for $r<\ell$. In \cite{DHY}, De Silva, Heysse, and Young proved that 
\[\operatorname{ex}(K_{n_1,n_2,\ldots,n_\ell},kK_2)=(k-1)\left(\sum_{i=2}^\ell n_i \right),\]
however the Tur\'{a}n number is open for $r\geq 3$. The graph $$((n_1+n_2-k+1)K_1\cup K_{k-1,n_3})+n_4K_1$$ does not contain $kK_3$, hence
\[\operatorname{ex}(K_{n_1,n_2,n_3,n_4},kK_3)\geq (n_1+n_2+n_3)n_4+(k-1)n_3.\]
This construction can be easily generalized to $r$-partite graphs, but it is not clear that this is an extremal construction.

%
%
%

We also note that many of the results cited in this paper were originally considered in conjunction with the \emph{rainbow number} $\operatorname{rb}(G,H)$. For a graph $G$ and subgraph $H$, $\operatorname{rb}(G,H)$ is the minimum number of colors required to ensure that every edge coloring of $G$ with $\operatorname{rb}(G,H)$ colors has a rainbow copy of $H$ (where a subgraph is \emph{rainbow} if it has no two edges with the same color).  Often the rainbow number is proven via the analogous Tur\'{a}n number, and it would be interesting to see this work extended to the rainbow number $\operatorname{rb}(K_{n_1,\ldots, n_r},kK_r)$.

\section{References}

\bibliographystyle{model1-num-names}
\bibliography{sample.bib}

\end{document}